\documentclass[12pt]{amsart}
\usepackage{amscd}
\usepackage{amsmath}
\usepackage{amssymb}
\usepackage{units}
\usepackage[all]{xy}
%
%
%

%
%
\def\frk{\frak}               

\def\Phi{{\frk n}}
\def\Phi{{\frk N}}
%
%

%
\def\opn#1#2{\def#1{\operatorname{#2}}} 
%
\opn\chara{char} \opn\length{\ell} \opn\pd{pd} \opn\rk{rk}
\opn\projdim{proj\,dim} \opn\injdim{inj\,dim} \opn\rank{rank}
\opn\depth{depth} \opn\sdepth{sdepth} \opn\fdepth{fdepth}
\opn\grade{grade} \opn\height{height} \opn\embdim{emb\,dim}
\opn\codim{codim}  \opn\min{min} \opn\max{max}

\opn\Tr{Tr} \opn\bigrank{big\,rank}
\opn\superheight{superheight}\opn\lcm{lcm}
\opn\trdeg{tr\,deg}
\opn\reg{reg} \opn\lreg{lreg} \opn\ini{in} \opn\lpd{lpd}
\opn\size{size}
%
\opn\div{div} \opn\Div{Div} \opn\cl{cl} \opn\Cl{Cl}
%
%
\opn\Spec{Spec} \opn\Supp{Supp} \opn\supp{supp} \opn\Sing{Sing}
\opn\Ass{Ass} \opn\Min{Min}
%
%
\opn\Ann{Ann} \opn\Rad{Rad} \opn\Soc{Soc}
%
%
\opn\Im{Im} \opn\Ker{Ker} \opn\Coker{Coker} \opn\Am{Am}
\opn\Hom{Hom} \opn\Tor{Tor} \opn\Ext{Ext} \opn\End{End}
\opn\Aut{Aut} \opn\id{id}  \opn\deg{deg}

\opn\nat{nat}
\opn\pff{pf}
\opn\Pf{Pf} \opn\GL{GL} \opn\SL{SL} \opn\mod{mod} \opn\ord{ord}
\opn\Gin{Gin} \opn\Hilb{Hilb}
%
%
\opn\aff{aff} \opn\con{conv} \opn\relint{relint} \opn\st{st}
\opn\lk{lk} \opn\cn{cn} \opn\core{core} \opn\vol{vol}
\opn\link{link} \opn\star{star}
\opn\gr{gr}

%
%

\def\pot#1#2{#1[\kern-0.28ex[#2]\kern-0.28ex]}

%
%
\opn\dirlim{\underrightarrow{\lim}}
\opn\inivlim{\underleftarrow{\lim}}
%
%
%

%
%
\let\to=\rightarrow

\def\Implies{\ifmmode\Longrightarrow \else
        \unskip${}\Longrightarrow{}$\ignorespaces\fi}
\def\implies{\ifmmode\Rightarrow \else
        \unskip${}\Rightarrow{}$\ignorespaces\fi}
\def\iff{\ifmmode\Longleftrightarrow \else
        \unskip${}\Longleftrightarrow{}$\ignorespaces\fi}

\let\:=\colon
\newtheorem{Theorem}{Theorem}[section]
\newtheorem{Lemma}[Theorem]{Lemma}
\newtheorem{Corollary}[Theorem]{Corollary}
\newtheorem{Proposition}[Theorem]{Proposition}
\newtheorem{Remark}[Theorem]{Remark}

\newtheorem{Example}[Theorem]{Example}

%
%
\let\epsilon\varepsilon
\let\phi=\varphi
\let\kappa=\varkappa
%
%
\textwidth=15cm \textheight=22cm \topmargin=0.5cm
\oddsidemargin=0.5cm \evensidemargin=0.5cm \pagestyle{plain}
%
%
\def\qed{\ifhmode\textqed\fi
      \ifmmode\ifinner\quad\qedsymbol\else\dispqed\fi\fi}
\def\textqed{\unskip\nobreak\penalty50
       \hskip2em\hbox{}\nobreak\hfil\qedsymbol
       \parfillskip=0pt \finalhyphendemerits=0}
\def\dispqed{\rlap{\qquad\qedsymbol}}

%
\opn\dis{dis}
\def\pnt{{\raise0.5mm\hbox{\large\bf.}}}

\opn\Lex{Lex}

\begin{document}
\title{\bf Upper bounds of depth of  monomial  ideals}

\author{ Dorin Popescu }

\thanks{The  support from  grant ID-PCE-2011-1023 of Romanian Ministry of Education, Research and Innovation is gratefully acknowledged.}

\address{Dorin Popescu,  "Simion Stoilow" Institute of Mathematics of Romanian Academy, Research unit 5,
 P.O.Box 1-764, Bucharest 014700, Romania}
\email{dorin.popescu@imar.ro}

\maketitle
\begin{abstract}
 Let $J\subsetneq I$ be two ideals of a polynomial ring $S$ over a field, generated by square free  monomials.
 We show that some inequalities among the numbers of  square free monomials of $I\setminus J$ of different degrees give upper bounds of $\depth_SI/J$.

  \vskip 0.4 true cm
 \noindent
  {\it Key words } : Square free monomial ideals,  Depth, Stanley depth.\\
 {\it 2000 Mathematics Subject Classification: Primary 13C15, Secondary 13F20, 13F55.}
\end{abstract}

\section*{Introduction}

Let $S=K[x_1,\ldots,x_n]$ be the polynomial algebra in $n$ variables over a field $K$,  $d\leq t$  be two positive integers   and $I\supsetneq J$, be two  square free monomial ideals of $S$ such that $I$ is generated in degrees $\geq d$, respectively $J$ in degrees $\geq d+1$. By \cite[Theorem 3.1]{HVZ} and \cite[Lemma 1.1]{P2} $\depth_S I/J\geq d$. Let $\rho_t(I\setminus J)$  be the number of all square free monomials of degree $t$ of $I\setminus J$.
\begin{Theorem} (\cite[Theorem 2.2]{P2})\label{m0} If $\rho_d(I)>\rho_{d+1}(I\setminus J)$ then $\depth_S I/J=d$, independently of the characteristic of $K$.
\end{Theorem}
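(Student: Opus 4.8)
The plan is to combine the cited lower bound $\depth_S I/J\geq d$ with a matching upper bound $\depth_S I/J\leq d$, the latter proved by contradiction through a purely numerical (hence characteristic-free) argument. Write $M=I/J$. Since both $\depth$ and the quantities $\rho_t$ are unchanged under the base field extension $K\subseteq\bar K$, I would first reduce to the case $K$ infinite. Over an infinite field the depth equals the length of an $M$-regular sequence of generic linear forms, so if I assume for contradiction that $\depth_S M\geq d+1$, I may pick linear forms $\ell_1,\dots,\ell_{d+1}$ that form an $M$-regular sequence. Then $N=M/(\ell_1,\dots,\ell_{d+1})M$ is a genuine graded module whose Hilbert series is $(1-s)^{d+1}H_M(s)$; in particular every coefficient of $(1-s)^{d+1}H_M(s)$ is nonnegative. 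The whole proof then reduces to reading off two coefficients of $H_M(s)$ and deriving an inequality contradicting $\rho_d(I)>\rho_{d+1}(I\setminus J)$.

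The heart of the argument, and the step I expect to require the most care, is the exact computation of $\dim_K M_d$ and $\dim_K M_{d+1}$, where both hypotheses on generating degrees enter. Because $I$ is generated in degrees $\geq d$, every degree-$d$ monomial of $I$ is itself a (squarefree) minimal generator, so $M_d=I_d$ has dimension $\rho_d(I)$ and $M_t=0$ for $t<d$. For degree $d+1$ I would split the monomials of $I$ into squarefree and non-squarefree ones. A non-squarefree $w\in I$ of degree $d+1$ must be divisible by a degree-$d$ generator $g$ of $I$ (the alternative $\deg g=d+1$ forces $w=g$, squarefree); writing $w=x_j g$, non-squarefreeness means exactly $j\in\supp(g)$, and one checks the pair $(g,j)$ is then uniquely determined by $w$. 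Hence the non-squarefree degree-$(d+1)$ monomials of $I$ are in bijection with pairs $(g,j)$, $g$ a squarefree degree-$d$ monomial of $I$ and $j\in\supp(g)$, giving $d\,\rho_d(I)$ of them. Since $J$ is generated in degrees $\geq d+1$, it contains no non-squarefree monomial of degree $d+1$ (such a monomial would have to coincide with a squarefree generator), so all of these survive in $M$, while the squarefree degree-$(d+1)$ monomials contribute $\rho_{d+1}(I)-\rho_{d+1}(J)=\rho_{d+1}(I\setminus J)$. Therefore
$$\dim_K M_{d+1}=\rho_{d+1}(I\setminus J)+d\,\rho_d(I).$$

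Finally I would combine the two inputs. With $H_M(s)=\rho_d(I)\,s^d+(\dim_K M_{d+1})\,s^{d+1}+\cdots$ and $M_t=0$ for $t<d$, the coefficient of $s^{d+1}$ in $(1-s)^{d+1}H_M(s)$ equals $\dim_K M_{d+1}-(d+1)\,\rho_d(I)$. Nonnegativity of this coefficient gives $\dim_K M_{d+1}\geq(d+1)\,\rho_d(I)$, that is,
$$\rho_{d+1}(I\setminus J)+d\,\rho_d(I)\geq(d+1)\,\rho_d(I),$$
whence $\rho_{d+1}(I\setminus J)\geq\rho_d(I)$, contradicting the hypothesis. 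Thus $\depth_S M\leq d$, and with the cited lower bound $\depth_S I/J=d$. Note that the argument never invokes the homology of any simplicial complex over $K$, only Hilbert-function identities and the existence of a regular sequence of linear forms, which is precisely why the conclusion is independent of $\chara K$.
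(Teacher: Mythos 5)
Your proof is correct, but it takes a genuinely different route from the paper's. Within this paper the statement is quoted from \cite{P2} and is reproved as the case $t=d$ of Theorem \ref{m}, whose argument is homological: Lemma \ref{key} isolates the strand of the Koszul complex $K_\bullet(x;I/J)$ spanned by the elements $f_qe_{\sigma_q}$ attached to squarefree monomials of $I\setminus J$, encodes its differentials in incidence matrices $h_{n-d-i}$ with entries $0,\pm 1$, and concludes from $h_{n-d+1}=0$ and $\rank h_{n-d}\leq \rho_{d+1}(I\setminus J)<\rho_d(I)$ that $H_{n-d}(x;I/J)\neq 0$, hence $\depth_S I/J\leq d$ by \cite[Theorem 1.6.17]{BH}. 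You instead argue by contradiction through Hilbert series: after passing to an infinite field you mod out a regular sequence of $d+1$ linear forms and use nonnegativity of the coefficient of $s^{d+1}$ in $(1-s)^{d+1}H_M(s)$, together with the count $\dim_K (I/J)_{d+1}=\rho_{d+1}(I\setminus J)+d\,\rho_d(I)$; your verification of this count (the bijection with pairs $(g,j)$, $j\in\supp g$, and the observation that $J$ contains no non-squarefree monomial of degree $d+1$) is correct and is precisely where both degree hypotheses enter. Your approach is more elementary and self-contained — only prime avoidance and Hilbert functions, no Koszul homology — and is transparently characteristic-free. What the paper's machinery buys is the generalization: the rank bookkeeping of Lemma \ref{key} propagates through several consecutive degrees and yields the alternating-sum bound $\rho_{t+1}(I\setminus J)<\alpha_t$ of Theorem \ref{m}, which a Hilbert-series argument does not obviously reach, since in degrees beyond $d+1$ the non-squarefree monomials of $I/J$ are no longer controlled by the $\rho_j$'s alone, and the exactness (rank) information exploited there is strictly finer than dimension counts.
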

The aim of this paper is to extend this theorem.
Our Theorem \ref{m} says that $\depth_SI/J= t$ if $\depth_SI/J\geq t$ and $$\rho_{t+1}(I\setminus J)<\sum_{i=0}^{t-d} (-1)^{t-d+i} \rho_{d+i}(I\setminus J).$$
If $t=d$ then this result is stated in Theorem \ref{m0} (a previous result is given in \cite{P1}). If $t=d+1$ then the above result says that $\depth_S I/J\leq d+1$ if  $$\rho_{d+1}(I\setminus J)>\rho_{d+2}(I\setminus J)+\rho_{d}(I\setminus J).$$ A particular case with $I$  principal  is given, with a different proof, in our Proposition \ref{el}. Theorem \ref{m0} is a small step in an attempt to show Stanley's Conjecture  for some  classes of factors of square free monomial ideals (see our Remark \ref{r} for some details) and we hope that our Theorem \ref{m} will be useful in the same frame.

\section{Upper bounds of depth}
The aim of this section is to show the extension of Theorem \ref{m0}  stated in the introduction. We start with a particular case.
\begin{Proposition}\label{el}  Suppose that $I$ is  generated by a square free monomial $f$ of degree $d$, and $s=\rho_{d+1}(I\setminus J)>\rho_{d+2}(I\setminus J)+1$. Then $\depth_S I/J= d+1$.
\end{Proposition}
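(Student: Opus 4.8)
The plan is to pass from $I/J$ to a cyclic module and read off its depth from a simplicial complex. Since $I=fS$ with $f$ a nonzerodivisor and $f\notin J$ (otherwise $I=(f)\subseteq J$ would contradict $J\subsetneq I$), multiplication by $f$ induces an isomorphism of $S$-modules $I/J\cong S/L$, where $L=J:f$ is again a square free monomial ideal. Hence it suffices to compute $\depth_S S/L$. Relabelling, write $f=x_1\cdots x_d$. I would first pin down the linear part of $L$: for $i\le d$ one has $x_i\in L\iff f\in J$, which is false, whereas for $i>d$ one has $x_i\in L\iff fx_i\in J$. By the definition of $s=\rho_{d+1}(I\setminus J)$, exactly $s$ of the variables $x_{d+1},\dots,x_n$ satisfy $fx_i\notin J$. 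Let $B$ be this set of $s$ indices and $A=\{d+1,\dots,n\}\setminus B$, so that $\{x_i:i\in A\}$ is precisely the set of degree one generators of $L$.

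Next I would peel off variables. The variables $x_i$, $i\in A$, annihilate $S/L$, so $\depth_S S/L=\depth_{\bar S}\bar S/\bar L$ where $\bar S=K[x_j:j\notin A]$ and $\bar L$ is the image of $L$. A minimality check shows that no minimal monomial generator of $\bar L$ involves any of $x_1,\dots,x_d$: if a generator were divisible by some $x_a$ with $a\le d$, then since $x_a\mid f$ this factor could be cancelled without changing membership in $J$, contradicting minimality. Therefore $\bar L$ is extended from the subring $S''=K[x_i:i\in B]$ in $s$ variables, say $\bar L=L''\bar S$, and $\bar S/\bar L=(S''/L'')[x_1,\dots,x_d]$. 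Adjoining $d$ polynomial variables raises depth by $d$, so
\[
\depth_S I/J=\depth_{S''}S''/L''+d,
\]
and the assertion $\depth_S I/J=d+1$ becomes equivalent to $\depth_{S''}S''/L''=1$.

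Finally I would analyse $S''/L''$ through its Stanley--Reisner complex $\Delta$ on the vertex set $B$. Its edges are the pairs $\{i,j\}\subseteq B$ with $x_ix_j\notin L''$, equivalently with $fx_ix_j\notin J$; these are exactly the monomials counted by $\rho_{d+2}(I\setminus J)$, since any square free degree $d+2$ monomial of $I\setminus J$ has the form $fx_ix_j$ with $i,j\in B$ (if $i\in A$ then $fx_i\mid fx_ix_j\in J$). Thus the $1$-skeleton of $\Delta$ is a graph $G$ on $s$ vertices with $\rho_{d+2}(I\setminus J)$ edges. The hypothesis $s>\rho_{d+2}(I\setminus J)+1$ forces at most $s-2$ edges, so $G$, and hence $\Delta$, is disconnected; on the other hand $L''$ has no linear generators, so $\Delta$ carries all $s$ vertices. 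By the standard relation between the depth of a Stanley--Reisner ring and the connectivity of its complex (Reisner's criterion / Hochster's formula), $\Delta$ nonempty gives $\depth_{S''}S''/L''\ge1$, while $\Delta$ disconnected gives $\tilde H_0(\Delta)\ne0$, hence $H^1_{\mm}(S''/L'')\ne0$ and $\depth_{S''}S''/L''\le1$. Therefore $\depth_{S''}S''/L''=1$ and $\depth_S I/J=d+1$.

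The step I expect to be the main obstacle is making the upper bound robust against higher degree generators of $L''$, which could shrink $\Delta$ below the flag complex of $G$ and a priori change its depth. The point that rescues the argument is that the connectivity of a simplicial complex depends only on its $1$-skeleton, which here coincides with $G$ no matter what higher generators $L''$ has; this is exactly why the edge bound $\le s-2$, i.e. the disconnectedness of $G$, suffices to cap the depth at $1$.
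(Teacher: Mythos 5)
Your proof is correct, but it takes a genuinely different route from the paper's. The paper argues by cases on $q=\rho_{d+2}(I\setminus J)$: for $q>0$ it sets $T=(b_3,\ldots,b_s)$, uses the exact sequence $0\to T/T\cap J\to I/J\to I/(T+J)\to 0$, applies the earlier counting result (Theorem \ref{m0}) to the left term, computes $(T+J):f=(x_{d+3},\ldots,x_n)$ to see the right term has depth $d+2$, and concludes by the Depth Lemma; for $q=0$ it identifies $I/J$ directly with a quotient by a square free Veronese ideal. You instead reduce at once to a Stanley--Reisner ring: $I/J\cong S/(J:f)$ via multiplication by the nonzerodivisor $f$, the variables indexed by $A$ annihilate this quotient, the variables dividing $f$ occur in no minimal generator of $J:f$ (your cancellation argument is valid, since $J$ is square free) and so contribute a polynomial extension adding $d$ to the depth, leaving a complex $\Delta$ on the $s$ vertices of $B$ whose $1$-skeleton has exactly $\rho_{d+2}(I\setminus J)\leq s-2$ edges. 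A connected graph on $s$ vertices needs at least $s-1$ edges, so $\Delta$ is disconnected, ${\tilde H}_0(\Delta)\neq 0$, hence $H^1_{\mm}(S''/L'')\neq 0$ by Hochster's formula and the depth is at most $1$; absence of linear generators gives depth at least $1$. Your flagged worry is handled correctly: higher-degree generators of $L''$ only delete faces of dimension $\geq 2$, and ${\tilde H}_0$ depends only on the $1$-skeleton. What your route buys is uniformity (no case split), independence from Theorem \ref{m0}, and a transparent combinatorial reading of the hypothesis $s>\rho_{d+2}(I\setminus J)+1$ as exactly the edge count forcing disconnectedness; what the paper's route buys is that it stays within the elementary toolkit (the Depth Lemma and the prior theorem) that the paper then generalizes by Koszul homology in Theorem \ref{m}, avoiding local cohomology of Stanley--Reisner rings altogether. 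Both arguments are visibly characteristic-free.
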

\begin{proof} First suppose that $q=\rho_{d+2}(I\setminus J)>0$. Let $g\in I\setminus J$ be a square free monomial of degree $d+2$. Renumbering the variables $x$ we may suppose that $I$ is generated by $f=x_1\cdots x_d$ and $g=fx_{d+1}x_{d+2}$. Since $g\not \in J$ we see that $b_1=fx_{d+1}$,
$b_2=fx_{d+2}$ are not in $J$. Again renumbering $x$  we may suppose that $b_i=fx_{d+i}$, $i\in [s]$ are all the square free monomials of degree $d+1$ from $I\setminus J$. Set $T=(b_3,\ldots,b_s)$ (by hypothesis $s\geq 3$). In the exact sequence
$$0\to T/T\cap J\to I/J\to I/(T+J)\to 0$$
we see that the left end has depth $d+1$  by Theorem \ref{m0} since $T\cap J$ is generated in degree $\geq d+2$ and $\rho_{d+1}(T)=s-2>q-1=\rho_{d+2}(T\setminus T\cap J)$.
On the other hand, $(T+J):f=(x_{d+3},\ldots,x_n)$ because $b_1,b_2,g\not\in T+J$. It follows that $\depth_S I/(T+J)=d+2$ and so the Depth Lemma says that $\depth_S I/J=d+1$.

Now suppose that $q=0$. As above we may assume that $b_i=fx_{d+i}$, $i\in [s]$ are the square free monomials of degree $d+1$ of $I\setminus J$. Then $J:f=(x_{d+s+1},\dots,x_n)+L$, where $L$ is the Veronese ideal generated by all square free monomials of degree $2$ in $x_{d+1},\ldots,x_{d+s}$. It follows that $I/J\cong K[x_1,\ldots, x_{d+s}]/L$ which has depth $d+1$.
\end{proof}

 Next  we present some details on the Koszul homology (see \cite{BH}) which we need for the proof of our main result. Let $\partial_i:K_i(x;I/J)\to K_{i-1}(x;I/J)$,  $K_i(x;I/J)\cong S^{{n\choose i}}$, $i\in [n]$ be the Koszul derivation given by
 $$\partial_i(e_{j_1}\wedge \ldots \wedge e_{j_i})=\sum_{k=1}^i (-1)^{k+1}x_{j_k} e_{j_1}\wedge \ldots \wedge e_{j_{k-1}}\wedge e_{j_{k+1}}\wedge\ldots \wedge e_{j_i}.$$
  Fix $0\leq i<n-d$. Let $f_1,\ldots,f_r$, $r=\rho_{d+i}(I\setminus J)$ be all square free monomials of degree $d+i$ from $I\setminus J$ and $b_1,\cdots,b_s$, $s=\rho_{d+i+1}(I\setminus J)$ be all square free monomials of degree $d+i+1$ from $I\setminus J$.   Let $\supp f_i=\{j\in [n]: x_j|f_i\}$,\ $e_{\sigma_i}=\wedge_{j\in ([n]\setminus \supp f_i)}\ e_j$ and $\supp b_k=\{j\in [n]: x_j|b_k\}$,\ $e_{\tau_k}=\wedge_{j\in ([n]\setminus \supp b_k)}\ e_j$. We consider the  element
  $z=\sum_{q=1}^ry_qf_q e_{\sigma_q}$
  of $K_{n-d-i}(x;I/J)$, where $y_q\in K$. Then $$\partial_{n-d-i}(z)=\sum_{k=1}^s(\sum_{q\in [r]} \epsilon_{kq}y_q)b_k e_{\tau_k},$$
 where   $ \epsilon_{kq}\in \{1,-1\}$ if $f_q|b_k$, otherwise  $ \epsilon_{kq}=0$. Thus $\partial_{n-d-i}(z)=0$ if and only if $\sum_{q\in [r]} \epsilon_{kq}y_q=0$ for all $k\in [s]$, that is $y=(y_1,\ldots,y_r)$ is in the kernel of the linear map
 $h_{n-d-i}:K^r\to K^s$ given by the matrix $\epsilon_{kq}$.

 Now we will see when $z\in \Im \partial_{n-d-i+1}$. Since the Koszul derivation is a graded map we note that $z\in \Im \partial_{n-d-i+1}$ if and only if $z=\partial_{n-d-i+1}(w)$ for a $w=\sum_{p=1}^c u_pg_p e_{\nu_p}$, where  $c=\rho_{d+i-1}(I\setminus J)$, $u_p\in K$, $g_1,\ldots,g_c$ are all square free monomials of degree $d+i-1$ from $I\setminus J$ and $e_{\nu_p}=\wedge_{j\in ([n]\setminus \supp g_p)}\ e_j$. It follows $$z=\sum_{q=1}^r(\sum_{p\in [c]} \gamma_{qp}u_p)f_q e_{\sigma_q},$$
where   $ \gamma_{qp}\in \{1,-1\}$ if $g_p|f_q$, otherwise  $ \gamma_{qp}=0$. Thus  $z\in \Im \partial_{n-d-i+1}$ if and only if $y$ belongs to the image of the linear map $h_{n-d-i+1}:K^c\to K^r$ given by the matrix $\gamma_{qp}$. When $i=0$ we have $h_{n-d-i+1}=0$.

Note that $\Im h_{n-d-i+1}\subset \Ker h_{n-d-i}$ and the inclusion is strict if and only if there exists $y\in K^r$ such that $z$ induces a nonzero element in $H_{n-d-i}(x;I/J)$. This implies $\depth_S I/J\leq d+i$ by \cite[Theorem 1.6.17]{BH}. If  $\depth I/J> d+i$ then $\Im \partial_{n-d-i+1}=\Ker \partial_{n-d-i}$ and it follows $\Im h_{n-d-i+1}= \Ker h_{n-d-i}$.

\begin{Lemma}\label{key}  Let $0\leq i<n-d$, then the following statements hold independently of the characteristic of $K$.
\begin{enumerate}
\item{} the complex $K^c\xrightarrow{h_{n-d-i+1}} K^r\xrightarrow{h_{n-d-i}} K^s$ is exact if   $\depth I/J> d+i$,
\item {} if $\depth_S I/J> d+i$ then $r=\rank h_{n-d-i+1}+\rank h_{n-d-i}$,
\item{} if $r>\rank h_{n-d-i+1}+\rank h_{n-d-i}$ then $\depth_S I/J\leq d+i.$
\end{enumerate}
\end{Lemma}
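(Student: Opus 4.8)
The plan is to read all three statements off the dictionary between the Koszul complex and the linear maps $h_{n-d-i},h_{n-d-i+1}$ that was established in the paragraphs preceding the lemma, feeding it into nothing more than rank-nullity. Recall what that dictionary gives: for a chain $z=\sum_q y_q f_q e_{\sigma_q}\in K_{n-d-i}(x;I/J)$ one has $\partial_{n-d-i}(z)=0$ exactly when $y\in\Ker h_{n-d-i}$, and $z\in\Im\partial_{n-d-i+1}$ exactly when $y\in\Im h_{n-d-i+1}$; moreover the inclusion $\Im h_{n-d-i+1}\subseteq\Ker h_{n-d-i}$ always holds, and it is strict precisely when $z$ induces a nonzero element of $H_{n-d-i}(x;I/J)$. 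Since every such $z$ has internal multidegree $(1,\dots,1)$, these maps describe exactly the multidegree-$(1,\dots,1)$ strand of the Koszul homology, which in the squarefree setting is where all the relevant homology lives.

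For (1) I would start from the depth characterization \cite[Theorem 1.6.17]{BH}: the hypothesis $\depth_S I/J> d+i$ forces $H_j(x;I/J)=0$ for all $j\geq n-d-i$, in particular $H_{n-d-i}(x;I/J)=0$. Hence $\Im\partial_{n-d-i+1}=\Ker\partial_{n-d-i}$ in every degree, and translating through the dictionary yields $\Im h_{n-d-i+1}=\Ker h_{n-d-i}$, which is precisely exactness of $K^c\xrightarrow{h_{n-d-i+1}} K^r\xrightarrow{h_{n-d-i}} K^s$ at the middle term.

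Statement (2) is then immediate by rank-nullity applied to $h_{n-d-i}\colon K^r\to K^s$: exactness gives $\dim\Ker h_{n-d-i}=\dim\Im h_{n-d-i+1}=\rank h_{n-d-i+1}$, while rank-nullity gives $\dim\Ker h_{n-d-i}=r-\rank h_{n-d-i}$; equating the two yields $r=\rank h_{n-d-i+1}+\rank h_{n-d-i}$. For (3) I would run the same count in reverse. The hypothesis $r>\rank h_{n-d-i+1}+\rank h_{n-d-i}$ rearranges to $\dim\Im h_{n-d-i+1}=\rank h_{n-d-i+1}<r-\rank h_{n-d-i}=\dim\Ker h_{n-d-i}$, so the always-valid inclusion $\Im h_{n-d-i+1}\subseteq\Ker h_{n-d-i}$ is strict. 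By the dictionary this produces a nonzero element of $H_{n-d-i}(x;I/J)$, and \cite[Theorem 1.6.17]{BH} forces $\depth_S I/J\leq n-(n-d-i)=d+i$.

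I do not expect a genuine obstacle here: the substantive computation, namely the identification of Koszul cycles and boundaries with $\Ker h_{n-d-i}$ and $\Im h_{n-d-i+1}$, has already been carried out before the lemma, and what remains is the elementary dimension bookkeeping above. The only point worth flagging is the claim that all three implications hold in every characteristic. This is automatic: the whole argument is linear algebra over $K$ together with the Koszul--depth dictionary, and at no stage does one divide by an integer, so although the individual ranks of $h_{n-d-i},h_{n-d-i+1}$ and the value of $\depth_S I/J$ may themselves depend on $\chara K$, the three implications relating them do not.
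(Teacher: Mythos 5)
Your proposal is correct and follows essentially the same route as the paper: both rely on the dictionary between Koszul cycles/boundaries and $\Ker h_{n-d-i}$, $\Im h_{n-d-i+1}$ established just before the lemma, combine it with the depth characterization of \cite[Theorem 1.6.17]{BH}, and finish with rank--nullity. The paper merely compresses this into ``follows from above,'' whereas you spell out the dimension count explicitly; there is no substantive difference.
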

\begin{proof} The first statement follows from above and the second one is only a consequence.  If $r>\rank h_{n-d-i+1}+\rank h_{n-d-i}$ then $\Im h_{n-d-i+1}\subsetneq \Ker h_{n-d-i}$ and the last statement follows also from above.
\end{proof}

\begin{Theorem} \label{m} Let $d\leq t\leq n$ be two integers and  set
$$\alpha_j=\sum_{i=0}^{j-d} (-1)^{j-d+i} \rho_{d+i}(I\setminus J),$$
for $d\leq j\leq t$.
Suppose that $\depth_SI/J\geq t$ and $\rho_{t+1}(I\setminus J)<\alpha_t$.
Then $\depth_SI/J=t$ independently of the characteristic of $K$.
\end{Theorem}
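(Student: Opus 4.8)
The plan is to apply Lemma \ref{key} at the single level $i=t-d$. For that level, part (3) of the Lemma tells us that $\depth_S I/J\leq t$ as soon as we can establish the strict inequality
$$\rho_t(I\setminus J)>\rank h_{n-t+1}+\rank h_{n-t},$$
and combining $\depth_S I/J\leq t$ with the hypothesis $\depth_S I/J\geq t$ yields the desired equality. So the whole proof reduces to producing this one inequality. The boundary value $t=n$ is trivial, since $\depth_S I/J\leq n$ always; hence I may assume $d\leq t<n$, which puts $i=t-d$ inside the range $0\leq i<n-d$ required by the Lemma.

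The key step is to compute $\rank h_{n-t+1}$ exactly, and here I would exploit that the hypothesis $\depth_S I/J\geq t$ in fact gives $\depth_S I/J> j$ for \emph{every} degree $j$ with $d\leq j\leq t-1$. For each such $j$, applying Lemma \ref{key}(2) at the level $i=j-d$ gives the additive rank relation $\rho_j(I\setminus J)=\rank h_{n-j+1}+\rank h_{n-j}$. I would then form the alternating sum $\sum_{j=d}^{t-1}(-1)^{t-1-j}\rho_j(I\setminus J)$; substituting the relations, the right-hand side telescopes, and, using that $h_{n-d+1}=0$ (the case $i=0$ recorded just before the Lemma), all terms cancel except $\rank h_{n-t+1}$. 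This leaves
$$\rank h_{n-t+1}=\sum_{j=d}^{t-1}(-1)^{t-1-j}\rho_j(I\setminus J)=\alpha_{t-1}$$
when $t>d$; when $t=d$ the sum is empty and one reads off $\rank h_{n-d+1}=0$ directly. In either case, invoking the one-step recursion $\alpha_t=\rho_t(I\setminus J)-\alpha_{t-1}$ (immediate from the definition by splitting off the top term $\rho_t$), I obtain $\rank h_{n-t+1}=\rho_t(I\setminus J)-\alpha_t$ uniformly.

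It then remains to bound the second rank. Since $h_{n-t}$ maps into $K^{\rho_{t+1}(I\setminus J)}$, one has $\rank h_{n-t}\leq\rho_{t+1}(I\setminus J)$, and the hypothesis $\rho_{t+1}(I\setminus J)<\alpha_t$ forces $\rank h_{n-t}<\alpha_t$. Adding the two estimates gives
$$\rank h_{n-t+1}+\rank h_{n-t}<\bigl(\rho_t(I\setminus J)-\alpha_t\bigr)+\alpha_t=\rho_t(I\setminus J),$$
which is precisely the inequality needed to trigger Lemma \ref{key}(3), and the proof concludes as described.

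I expect the main obstacle to be the exact determination of $\rank h_{n-t+1}$: it is essential that the single hypothesis $\depth_S I/J\geq t$ simultaneously supplies the additive rank relation of Lemma \ref{key}(2) at \emph{all} intermediate degrees $d\leq j\leq t-1$, since otherwise the telescoping that pins down this rank would break. Once the intermediate relations are available, the bookkeeping with the alternating sums and the recursion for $\alpha_t$ is routine.
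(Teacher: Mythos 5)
Your proof is correct and follows essentially the same route as the paper's: both apply Lemma \ref{key}(2) at every intermediate degree $d\leq j\leq t-1$ to pin down $\rank h_{n-t+1}=\alpha_{t-1}=\rho_t(I\setminus J)-\alpha_t$ (your telescoping alternating sum is just the paper's step-by-step recurrence), then bound $\rank h_{n-t}\leq\rho_{t+1}(I\setminus J)<\alpha_t$ and conclude via Lemma \ref{key}(3). The only difference is your explicit handling of the edge case $t=n$, which the paper leaves implicit.
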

\begin{proof} We have $\alpha_j=\rho_j(I\setminus J)-\alpha_{j-1}$ for $d<j\leq t$. By Lemma \ref{key} (2) we get $h_{n-d}$ injective and $\rho_{d+i}(I\setminus J)=\rank h_{n-d-i+1}+\rank h_{n-d-i}$ for $0<i<t-d.$ It follows that $\rho_{d}(I\setminus J)=\rank h_{n-d}=\alpha_d$,
$\rho_{d+1}(I\setminus J)=\rank h_{n-d}+\rank h_{n-d-1}=\rho_{d}(I\setminus J)+\rank h_{n-d-1}$, and so $\rank h_{n-d-1}=\alpha_{d+1}$.
By recurrence we get $\rank h_{n-t+1}=\alpha_{t-1}$.
Clearly, $\rank h_{n-t}\leq \rho_{t+1}(I\setminus J)$. By hypothesis,  $\rho_{t+1}(I\setminus J)<\alpha_t=\rho_t(I\setminus J)-\alpha_{t-1}$. It follows that $\rank h_{n-t}<\rho_t(I\setminus J)-\alpha_{t-1}=\rho_t(I\setminus J)-\rank h_{n-t+1}$ which gives
 $\depth_SI/J=t$ by Lemma \ref{key} (3).
\end{proof}

Next example shows that the above theorem is tight.
\begin{Example} {\em Let $n=4$, $I=(x_1,x_3)$, $J=(x_1x_4)$. Note that $x_1x_2,x_1x_3,x_2x_3,x_3x_4$ are all square free monomials of degree $2$ from $I\setminus J$ and $x_1x_2x_3,x_2x_3x_4$ are all square free monomials of degree $3$ from $I\setminus J$. Thus $\rho_2(I\setminus J)=4=\rho_1(I)+\rho_3(I\setminus J)$, but   $\depth_SI/J= 3$.
On the other hand, taking $J'=J+(x_2x_3x_4)$ we see that $\depth_SI/J'=2$ which is given also by Theorem \ref{m} since   $\rho_3(I\setminus J')=1$ and we have $\rho_2(I\setminus J')=4>3=\rho_1(I)+\rho_3(I\setminus J')$.}
\end{Example}

\begin{Corollary}\label{c} Suppose that $\depth_S I/J\geq d+2$. Then $\rho_d(I)\leq \rho_{d+1}(I\setminus J)\leq \rho_d(I)+
\rho_{d+2}(I\setminus J)$. Moreover, if  $\rho_{d+2}(I\setminus J)=0$ then $\rho_d(I)=\rho_{d+1}(I\setminus J)$.
\end{Corollary}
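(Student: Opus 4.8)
The plan is to read off both inequalities directly from Lemma \ref{key}(2), applied at the two smallest admissible values of $i$. First I would record the elementary reduction $\rho_d(I)=\rho_d(I\setminus J)$: since $J$ is generated in degrees $\geq d+1$, no monomial of degree $d$ can lie in $J$, so every square free monomial of degree $d$ in $I$ already belongs to $I\setminus J$. I would also note that $\depth_S I/J\geq d+2$ forces $n\geq d+2$ (depth never exceeds $n$), so both $i=0$ and $i=1$ satisfy $0\leq i<n-d$ and Lemma \ref{key} is available at these values.

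Next I would apply Lemma \ref{key}(2). At $i=0$, since $\depth_S I/J>d$ and $h_{n-d+1}=0$ in this case, I get $\rho_d(I)=\rho_d(I\setminus J)=\rank h_{n-d}$. At $i=1$, since $\depth_S I/J>d+1$, I get $\rho_{d+1}(I\setminus J)=\rank h_{n-d}+\rank h_{n-d-1}$. Subtracting the first identity from the second yields the single clean equation
$$\rho_{d+1}(I\setminus J)-\rho_d(I)=\rank h_{n-d-1}.$$

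Both inequalities then fall out of this identity together with the description $h_{n-d-1}\colon K^{\rho_{d+1}(I\setminus J)}\to K^{\rho_{d+2}(I\setminus J)}$. The left inequality $\rho_d(I)\leq\rho_{d+1}(I\setminus J)$ is merely $\rank h_{n-d-1}\geq 0$, and the right inequality follows because the rank of a linear map is at most the dimension of its target, giving $\rank h_{n-d-1}\leq\rho_{d+2}(I\setminus J)$. For the final assertion, if $\rho_{d+2}(I\setminus J)=0$ then the target of $h_{n-d-1}$ is the zero space, so $\rank h_{n-d-1}=0$ and hence $\rho_d(I)=\rho_{d+1}(I\setminus J)$.

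There is essentially no obstacle here, since the corollary is an immediate numerical consequence of the rank identity in Lemma \ref{key}(2). The only point that needs a moment's care is the index bookkeeping: confirming that $h_{n-d-1}$ has source indexed by the degree-$(d+1)$ monomials of $I\setminus J$ and target indexed by the degree-$(d+2)$ monomials, and that the $i=0$ instance contributes no $h_{n-d+1}$ term so that $\rho_d(I)$ equals $\rank h_{n-d}$ exactly. Once these are matched up, the statement follows directly.
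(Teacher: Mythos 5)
Your proof is correct, but it takes a different (more direct) route than the one the paper intends. The paper states Corollary \ref{c} immediately after Theorem \ref{m} with no separate proof, the intended argument being contraposition of that theorem at the two values $t=d$ and $t=d+1$: if the left inequality failed, then $\rho_{d+1}(I\setminus J)<\alpha_d=\rho_d(I)$ would force $\depth_S I/J=d$; if the right one failed, then $\rho_{d+2}(I\setminus J)<\alpha_{d+1}=\rho_{d+1}(I\setminus J)-\rho_d(I)$ would force $\depth_S I/J=d+1$; either conclusion contradicts $\depth_S I/J\geq d+2$, and the ``moreover'' clause follows by squeezing the two inequalities. You instead bypass Theorem \ref{m} entirely and read everything off Lemma \ref{key}(2) at $i=0$ and $i=1$, obtaining the single identity $\rho_{d+1}(I\setminus J)-\rho_d(I)=\rank h_{n-d-1}$, from which both inequalities follow at once ($\rank\geq 0$ for the left, rank bounded by the dimension $\rho_{d+2}(I\setminus J)$ of the target for the right), with the ``moreover'' part immediate. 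The two arguments rest on the same Koszul rank bookkeeping --- indeed the proof of Theorem \ref{m} is exactly this computation in general --- but your version buys a sharper piece of information: the defect $\rho_{d+1}(I\setminus J)-\rho_d(I)$ is not merely sandwiched between $0$ and $\rho_{d+2}(I\setminus J)$, it \emph{equals} the rank of a specific linear map, which explains the final assertion rather than deducing it by squeezing. Your side remarks are also the right ones to make: $\rho_d(I)=\rho_d(I\setminus J)$ because $J$ is generated in degrees $\geq d+1$, the hypothesis forces $n-d\geq 2$ so that $i=0,1$ are admissible in Lemma \ref{key}, and $h_{n-d+1}=0$ in the $i=0$ case, exactly as the paper notes.
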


\begin{Remark}\label{r} {\em Consider the poset $P_{I\setminus J}$  of all square free monomials of $I\setminus J$ (a finite set) with the order given by the divisibility. Let ${\mathcal P}$ be a partition of  $P_{I\setminus J}$ in intervals $[u,v]=\{w\in  P_{I\setminus J}: u|w, w|v\}$, let us say   $P_{I\setminus J}=\cup_i [u_i,v_i]$, the union being disjoint.
Define $\sdepth {\mathcal P}=\min_i\deg v_i$ and $\sdepth_SI/J=\max_{\mathcal P} \sdepth {\mathcal P}$, where ${\mathcal P}$ runs in the set of all partitions of $P_{I\setminus J}$. This is the Stanley depth of $I/J$, in the idea of  \cite{HVZ} (see  also \cite{S}).
 Stanley's Conjecture says that  $\sdepth_S I/J\geq \depth_S I/J$. In the above corollary $\rho_{d+2}(I\setminus J)=0$ implies
 $\sdepth_SI/J\leq d+1$ and so $\depth_SI/J\leq d+1$ if Stanley's Conjecture holds. This shows the weakness of the above corollary,
 which accepts the possibility to have $\depth_S I/J\geq d+2$ when $\rho_d(I)=\rho_{d+1}(I\setminus J)$.
}
\end{Remark}
\vskip 0.5 cm

\end{document}